
\documentclass{amsart}


\usepackage{amsfonts}

\usepackage{graphicx}

\usepackage{url}

\DeclareGraphicsExtensions{.pdf,.png,.jpg}
\usepackage{xcolor}
\usepackage{tikz}
\usetikzlibrary{arrows}

\theoremstyle{plain}
\newtheorem{Thm}{Theorem}

\newtheorem{Prop}[Thm]{Proposition}
\newtheorem{Lem}{Lemma}

\theoremstyle{definition}
\newcommand{\sgn}{\operatorname{sgn}}
\newcommand{\spa}{\operatorname{span}}


\allowdisplaybreaks[3]

\begin{document}

\date{\today}

\title[Fourier--Jacobi expansions in Morrey spaces]{%
Fourier--Jacobi expansions in Morrey spaces}

\author[A. Arenas]{Alberto Arenas}
\address{Departamento de Matem\'aticas y Computaci\'on,
Universidad de La Rioja, 
Calle Luis de Ulloa s/n, 26004 Logro\~no, Spain}
\email{alarenas@unirioja.es}

\author[\'O. Ciaurri]{\'Oscar Ciaurri}
\address{Departamento de Matem\'aticas y Computaci\'on,
Universidad de La Rioja, 
Calle Luis de Ulloa s/n, 26004 Logro\~no, Spain}
\email{oscar.ciaurri@unirioja.es}
\thanks{Research of the author supported by grant MTM2012-36732-C03-02 of the DGI}


\keywords{Fourier--Jacobi expansions, Morrey spaces, weighted
inequalities.}
\subjclass[2010]{Primary 42C10, 43A50}

\begin{abstract}
In this paper we obtain a characterization of the convergence of
the partial sum operator related to Fourier--Jacobi expansions in
Morrey spaces.
\end{abstract}

\maketitle

\section{Introduction and main results}
For $\alpha,\beta>-1$, we consider the Jacobi functions
\[
p_n^{(\alpha,\beta)}(x)=d_{n}^{(\alpha,\beta)}P_n^{(\alpha,\beta)}(x)(1-x)^{\alpha/2}(1+x)^{\beta/2},
\quad x\in (-1,1), \quad n=0,1,2,\dots,
\]
where $P_n^{(\alpha,\beta)}$ denotes the Jacobi polynomial of
order $(\alpha,\beta)$ and degree $n$, and
\[
\frac{1}{d_n^{(\alpha,\beta)}}=\left(\int_{-1}^{1}\Big(P_n^{(\alpha,\beta)}(x)\Big)^2(1-x)^{\alpha}(1+x)^{\beta}\,
dx\right)^{1/2}.
\]
The system of functions $\{p_n^{(\alpha,\beta)}\}_{n\ge 0}$ is
orthonormal and complete in $L^2(-1,1)$ with the Lebesgue measure. Given an appropriate function $f$, its Fourier expansion respect to the Jacobi functions, which we call Fourier--Jacobi expansion,
is given by
\[
f\sim \sum_{k=0}^\infty a_k^{(\alpha,\beta)}(f)
p_k^{(\alpha,\beta)},
\qquad
a_k^{(\alpha,\beta)}(f)=\int_{-1}^1 p_k^{(\alpha,\beta)}(t)f(t)\,
dt.
\]

The convergence of the partial sum operator for the
Fourier--Jacobi expansions, given by
\[
S_nf=\sum_{k=0}^n a_k^{(\alpha,\beta)}(f)p_k^{(\alpha,\beta)},
\]
which is equivalent to the uniform boundedness of $S_n$, 
has been widely analyzed in different kinds of spaces. In the case of the Fourier series related to the Jacobi polynomials in $L^p$ spaces the first known results are due to Pollard \cite{Pollard-III} and Wing \cite{Wing} who
treated the case $\alpha,\beta \ge -1/2$. In \cite{Muck}, Muckenhoupt extended the analysis to the whole range $\alpha,\beta>-1$ and included some weights. With the sufficient conditions on the weights given in Muckenhoupt's paper  and by using the results about the necessary conditions in, for instance, \cite{MNT} (about this question see \cite{Cha,Mea,GPRV} also), the result for Fourier--Jacobi functions establishes that
\[
\|S_nf\|_{L^p(-1,1)}\le C \|f\|_{L^p(-1,1)}\iff \frac{4}{3}<p<4.
\]
A complete study of the boundedness properties of the partial sum operator in $L^{p,\infty}$ spaces can be seen in \cite{GPV}.

Our target in this paper is the study of the convergence of the
Fourier--Jacobi expansions in Morrey spaces. To this end, for
$1\le p<\infty$ and $0\le \lambda<1$ we define the Morrey space
$\mathcal{L}^{p,\lambda}(-1,1)$ as the set of functions $f$ on
$(-1,1)$ such that
\[
\|f\|_{p,\lambda}:=\sup_{x\in
(-1,1),r>0}\left(\frac{1}{r^\lambda}\int_{B(x,r)}|f(t)|^p\,
dt\right)^{1/p}<\infty,
\]
where $B(x,r)=\{t\in (-1,1):|t-x|\le r\}$. It is clear that $\mathcal{L}^{p,\lambda}(-1,1)$ are Banach spaces. Morrey spaces can be defined in a more general way but this
is enough for our purposes. The $L^p(-1,1)$ spaces with the Lebesgue measure correspond with the case $\lambda=0$.

Morrey spaces were introduced by Morrey, see \cite{Morrey}, in the setting of partial differential equations. In the last years, some classical operators from harmonic analysis have been analyzed in the setting of Morrey spaces, see, for instance, \cite{Adams}, \cite{Ros}, \cite{Samko-1}, \cite{Samko-2}, and the references therein.

Specially relevant for our purposes will be the boundedness with weights of the Hilbert transform given in \cite[Theorem 4.7 and Corollary 4.8]{Samko-1}. In particular, we will use the following version of that result: if
\[
Hf(x)=\int_{-1}^1 \frac{f(t)}{x-t}\, dt
\]
and
\[
w_k(x)=\prod_{j=1}^k |x-x_j|^{\gamma_j},
\]
with $-1\le x_1 <x_2<\cdots< x_{k-1}<x_k\le 1$, then for $1<p<\infty$ and $0\le \lambda<1$ it holds that
\begin{equation}
\label{eq:Hilbert}
\|w_kHf\|_{p,\lambda}\le C \|w_k f\|_{p,\lambda} \iff
\frac{\lambda-1}{p}<\gamma_j<1+\frac{\lambda-1}{p},\qquad j=1,\dots,k.
\end{equation}

Our result about convergence of Fourier-Jacobi expansions on Morrey spaces is the following.
\begin{Thm}
\label{thm:main}
Let $0\le \lambda <3/4$, $1< p<\infty$, and $\alpha,\beta \ge 0$. Then
\[
S_n f\longrightarrow f, \quad\text{in $\mathcal{L}^{p,\lambda}(-1,1)$},
\]
if and only if
\[
\frac{4}{3}(1-\lambda)<p<4(1-\lambda).
\]
\end{Thm}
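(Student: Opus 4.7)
The plan is to establish Theorem \ref{thm:main} via the classical two-step argument. First, convergence $S_nf\to f$ in $\mathcal{L}^{p,\lambda}(-1,1)$ is equivalent---on the closure of polynomials, the appropriate ``good'' subspace since polynomials are not dense in Morrey spaces in general---to the uniform boundedness $\sup_n\|S_n\|_{\mathcal{L}^{p,\lambda}\to\mathcal{L}^{p,\lambda}}<\infty$. The implication ``convergence $\Rightarrow$ uniform bound'' is Banach--Steinhaus; the reverse uses $S_nQ=Q$ for every polynomial $Q$ of degree at most $n$. With this reduction the entire task becomes to characterize the range of $p$ for which the uniform operator bound holds.

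The uniform bound will be obtained by a Pollard-type decomposition. By the Christoffel--Darboux identity the kernel of $S_n$ is a constant multiple of
\[
\frac{p_n^{(\alpha,\beta)}(x)p_{n+1}^{(\alpha,\beta)}(y)-p_{n+1}^{(\alpha,\beta)}(x)p_n^{(\alpha,\beta)}(y)}{x-y}.
\]
Following Pollard and Muckenhoupt, I would split $S_nf$ into a fixed number of pieces of two types: multiplier-type operators, controlled by pointwise bounds on $p_n^{(\alpha,\beta)}$ together with H\"older's inequality in the Morrey norm, and Hilbert-transform pieces of the shape $w_1(x)\,H(w_2\,f)(x)$ with $w_1,w_2$ of the form $(1-x)^{\gamma_1}(1+x)^{\gamma_2}$ produced by the Szeg\H{o} asymptotics of $p_n^{(\alpha,\beta)}$. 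After rewriting each Hilbert piece in the form $w(x)\,H(w^{-1}\phi)(x)$, the bound \eqref{eq:Hilbert} applies directly. For $\alpha,\beta\ge 0$ the binding exponents are $\gamma=\pm 1/4$, coming from the universal bulk factor $(1-x^{2})^{-1/4}$ in the asymptotics of $p_n^{(\alpha,\beta)}$: the condition $(\lambda-1)/p<1/4<1+(\lambda-1)/p$ gives $p>\frac{4}{3}(1-\lambda)$, while the analogue with $\gamma=-1/4$ gives $p<4(1-\lambda)$. Both together yield exactly the stated range. The endpoint-type exponents $\pm(\alpha/2+1/4)$ and $\pm(\beta/2+1/4)$, under the hypothesis $\alpha,\beta\ge 0$, combine with their duals to produce constraints that are no worse than the bulk ones, so the range does not shrink.

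The main obstacle I expect is the careful bookkeeping of the Pollard decomposition in the Morrey setting and the verification that every auxiliary step has a faithful Morrey analogue: pointwise multiplication by power weights must be controlled within the exponent thresholds of \eqref{eq:Hilbert}, and H\"older-type manipulations must respect the Morrey scaling. Morally the argument is a structure-preserving transfer of Muckenhoupt's $L^p$ proof to $\mathcal{L}^{p,\lambda}(-1,1)$, with \eqref{eq:Hilbert} playing the role of the classical $A_p$-based $L^p$ Hilbert bound. Necessity of the range is recovered by reversing the decomposition and invoking the ``only if'' direction of \eqref{eq:Hilbert}: one tests $S_n$ against truncated power functions $(1-x)^{a}(1+x)^{b}\chi_{[-1+\varepsilon,1-\varepsilon]}$ with $a,b$ chosen so that $f$ lies in $\mathcal{L}^{p,\lambda}$ uniformly in $\varepsilon$ while $\|S_nf\|_{p,\lambda}/\|f\|_{p,\lambda}\to\infty$ whenever $p$ falls outside $\bigl(\tfrac{4}{3}(1-\lambda),\,4(1-\lambda)\bigr)$.
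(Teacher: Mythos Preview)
Your sufficiency outline via Pollard's decomposition and the weighted Hilbert bound \eqref{eq:Hilbert} is the paper's route, with one correction and one underestimated step. The correction: on the bounded interval $(-1,1)$, the span $\spa\{p_n^{(\alpha,\beta)}\}$ \emph{is} dense in $\mathcal{L}^{p,\lambda}(-1,1)$ for $0\le\lambda<1$ (Proposition~\ref{prop:conve}), so there is no need to restrict to a closure. The underestimated step is the rank-one piece $W_{3,n}f(x)=p_{n+1}^{(\alpha,\beta)}(x)\int p_{n+1}^{(\alpha,\beta)}f$. ``H\"older's inequality in the Morrey norm'' is not enough here: one needs a Morrey--H\"older estimate of the form $\int|fg|\le C\|f\|_{p,\lambda}\|g\|_{q,\lambda}^{*}$ with a predual-type quantity $\|\cdot\|_{q,\lambda}^{*}$ (Lemma following \cite{Go-Mus}), together with sharp two-sided estimates for $\|p_n^{(\alpha,\beta)}\|_{p,\lambda}$ and an upper bound for $\|p_n^{(\alpha,\beta)}\|_{q,\lambda}^{*}$ (Lemmas~\ref{lem:norms-p} and~\ref{lem:norms-q}). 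These lemmas are where the threshold $p=4(1-\lambda)$ shows up for the rank-one piece; your sketch does not account for this, and a naive $L^q$-H\"older bound would not respect the Morrey scaling.

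Your necessity argument has a genuine gap. ``Reversing the decomposition and invoking the only-if direction of \eqref{eq:Hilbert}'' does not directly yield unboundedness of $S_n$: the three Pollard pieces could in principle cancel, so unboundedness of one weighted Hilbert piece is not by itself conclusive. The paper proceeds differently. It first applies the uniform bound to $T_nf=S_nf-S_{n-1}f=a_n^{(\alpha,\beta)}(f)p_n^{(\alpha,\beta)}$ with the test functions $f=\sgn(p_n^{(\alpha,\beta)})$ and $f=\sgn(p_n^{(\alpha,\beta)})|p_n^{(\alpha,\beta)}|^{q-1}$; combined with the sharp Morrey-norm asymptotics of $p_n^{(\alpha,\beta)}$ in Lemma~\ref{lem:norms-p}, this forces $4(1-\lambda)/3\le p\le 4(1-\lambda)$. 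The open endpoints are then excluded separately: at each endpoint two of the three Pollard pieces \emph{are} uniformly bounded, so unboundedness of the remaining one (shown via Lemma~\ref{lem:Hil-Mario} and the M\'at\'e--Nevai--Totik lower bound \eqref{eq:MNT}) does transfer to $S_n$. Your truncated-power-function test does not supply either of these ingredients.
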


As a first step to prove Theorem \ref{thm:main} we start by establishing the equivalence between the convergence of the partial sums and the uniform boundedness of the operator $S_n$ in Morrey spaces.
\begin{Thm}
\label{thm:conve}
Let $0\le \lambda <1$, $1\le p<\infty$ and $\alpha,\beta > -1$. Then
\[
S_n f\longrightarrow f, \quad\text{in $\mathcal{L}^{p,\lambda}(-1,1)$},
\]
if and only if
\[
\|S_nf\|_{p,\lambda}\le C \|f\|_{p,\lambda},
\]
where $C$ is a constant independent of $n$ and $f$.
\end{Thm}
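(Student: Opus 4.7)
The plan is to follow the classical Banach--Steinhaus plus density scheme underlying the analogous $L^p$ equivalence. Two facts are needed at the outset: $\mathcal{L}^{p,\lambda}(-1,1)$ is a Banach space (as noted after the definition), and $S_n$ is an orthogonal projection in $L^2$ onto the finite-dimensional subspace $\mathcal{J}_n := \spa\{p_0^{(\alpha,\beta)}, \ldots, p_n^{(\alpha,\beta)}\}$, hence $S_n g = g$ for every $g \in \mathcal{J}_n$.

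For the implication ``$S_n f \to f$ in $\mathcal{L}^{p,\lambda}$ implies uniform boundedness'' I would invoke the uniform boundedness principle. The hypothesis gives $\sup_n \|S_n f\|_{p,\lambda} < \infty$ for each $f$, and since the target is a Banach space the Banach--Steinhaus theorem produces a uniform constant $C$ with $\|S_n f\|_{p,\lambda} \le C\|f\|_{p,\lambda}$ for all $n$ and $f$.

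For the converse direction, I would use the classical three-term estimate. Given $f \in \mathcal{L}^{p,\lambda}$ and $g \in \mathcal{J}_N$, for $n \ge N$ one has $S_n g = g$, and hence
\[
\|S_n f - f\|_{p,\lambda} \le \|S_n(f-g)\|_{p,\lambda} + \|g-f\|_{p,\lambda} \le (C+1)\|f-g\|_{p,\lambda}.
\]
Taking the infimum over $g \in \bigcup_N \mathcal{J}_N$, this gives $\limsup_n \|S_n f - f\|_{p,\lambda} \le (C+1)\operatorname{dist}(f,\bigcup_N \mathcal{J}_N)$, so the convergence is reduced to the density of $\bigcup_N \mathcal{J}_N$ in $\mathcal{L}^{p,\lambda}$.

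The main obstacle is precisely this density statement. In contrast to the $L^p$ case ($\lambda = 0$), Morrey spaces with $\lambda > 0$ are not separable and polynomial or continuous functions are generally \emph{not} norm-dense in them (e.g., a function of the form $|x|^{-(1-\lambda)/p}$ lies in $\mathcal{L}^{p,\lambda}$ but stays bounded away in Morrey norm from every continuous function). The equivalence is therefore most naturally read on the closed subspace $\overline{\bigcup_N \mathcal{J}_N}$ of $\mathcal{L}^{p,\lambda}$, where the above argument is complete; the technical work is confined to identifying this closure (or an explicit dense family of ``Jacobi test functions'' built from weighted polynomials $\pi(x)(1-x)^{\alpha/2}(1+x)^{\beta/2}$) so that the scheme goes through in the stated generality.
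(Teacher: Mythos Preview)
Your scheme coincides with the paper's: Banach--Steinhaus for the forward implication, and density of $\spa\{p_n^{(\alpha,\beta)}\}$ together with the three-term estimate for the converse. Where you stop and flag density as the obstacle, the paper asserts it as Proposition~\ref{prop:conve} and argues by contradiction: if the span were not dense, Hahn--Banach would yield a nonzero functional $T$ on $\mathcal{L}^{p,\lambda}(-1,1)$ annihilating every $p_n^{(\alpha,\beta)}$; using the inclusion $\mathcal{L}^{p,\lambda}(-1,1)\subset L^p(-1,1)$, the paper invokes Hahn--Banach once more to extend $T$ to $L^p(-1,1)$, represents it by some $g\in L^q(-1,1)$, and concludes $g=0$ from $a_n^{(\alpha,\beta)}(g)=0$.

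Your misgivings are justified and in fact point to a gap in the paper's argument rather than in yours. The second Hahn--Banach step requires $T$ to be bounded on $\mathcal{L}^{p,\lambda}(-1,1)$ \emph{with respect to the $L^p$ norm} of the ambient space; since $\|\cdot\|_{L^p}\le C\|\cdot\|_{p,\lambda}$ and not conversely, Morrey-continuity of $T$ does not imply $L^p$-continuity, so the extension is not justified. Your example $|x|^{(\lambda-1)/p}\in\mathcal{L}^{p,\lambda}(-1,1)$ indeed keeps positive Morrey distance from every function that is bounded near the origin, so for $\lambda>0$ the span of the (interior-smooth) Jacobi functions is not dense, and the equivalence as stated can only hold on its Morrey closure---precisely the reformulation you suggest.
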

So, Theorem \ref{thm:main} will follow from the uniform boundedness of the partial sum operator in Morrey spaces. The next theorem contains a characterization of the interval of values of $p$ for which this estimate holds.
\begin{Thm}
\label{thm:bound}
Let $0\le \lambda <3/4$, $1< p<\infty$, and $\alpha,\beta \ge 0$. Then
\begin{equation}
\label{eq:Sn}
\|S_nf\|_{p,\lambda}\le C \|f\|_{p,\lambda},
\end{equation}
where $C$ is a constant independent of $n$ and $f$, if and only if
\begin{equation}
\label{eq:condi}
\frac{4}{3}(1-\lambda)<p<4(1-\lambda).
\end{equation}
\end{Thm}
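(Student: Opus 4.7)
The plan is to adapt the classical Pollard--Muckenhoupt strategy for Fourier--Jacobi series in $L^p$ to the Morrey setting, replacing the weighted Hilbert transform bound in $L^p$ by its Morrey analogue \eqref{eq:Hilbert}.

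For the sufficiency direction, I would begin with the Christoffel--Darboux identity
\[
K_n(x,t)=\sum_{k=0}^n p_k^{(\alpha,\beta)}(x)p_k^{(\alpha,\beta)}(t)
=A_n\,\frac{p_{n+1}^{(\alpha,\beta)}(x)p_n^{(\alpha,\beta)}(t)-p_n^{(\alpha,\beta)}(x)p_{n+1}^{(\alpha,\beta)}(t)}{x-t},
\]
and, using the three--term recurrence and the standard contiguous relations for Jacobi polynomials, rewrite $S_nf$ as a finite sum of operators of the form
\[
U_n^i f(x)=\psi_{n,i}(x)\int_{-1}^{1}\frac{\varphi_{n,i}(t)\,f(t)}{x-t}\,dt
\]
plus two rank-one projections onto $p_n^{(\alpha,\beta)}$ and $p_{n+1}^{(\alpha,\beta)}$. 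Each $\psi_{n,i}$ and $\varphi_{n,i}$ is a Jacobi function with slightly shifted parameters; for $\alpha,\beta\ge 0$ the classical pointwise asymptotics give, outside the transition zones $|1\mp x|\sim n^{-2}$,
\[
|\psi_{n,i}(x)|\le C(1-x)^{\gamma_1}(1+x)^{\delta_1},\qquad
|\varphi_{n,i}(x)|\le C(1-x)^{-\gamma_1}(1+x)^{-\delta_1},
\]
with $\gamma_1,\delta_1\in\{\pm 1/4\}$. Setting $w(x)=(1-x)^{\gamma_1}(1+x)^{\delta_1}$, the desired bound $\|U_n^i f\|_{p,\lambda}\le C\|f\|_{p,\lambda}$ reduces, after absorbing $w$ into the Hilbert transform, to
\[
\|wHg\|_{p,\lambda}\le C\|wg\|_{p,\lambda},
\]
which by \eqref{eq:Hilbert} holds precisely when $\tfrac{\lambda-1}{p}<\gamma<1+\tfrac{\lambda-1}{p}$ for each $\gamma\in\{\pm 1/4\}$. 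An elementary computation then shows this four-fold condition is equivalent to \eqref{eq:condi}. The two rank-one projections are handled by a H\"older-type inequality in Morrey spaces combined with uniform $\mathcal{L}^{p,\lambda}$-estimates on $p_n^{(\alpha,\beta)}$ and $p_{n+1}^{(\alpha,\beta)}$, which follow from the same pointwise asymptotics and are valid under \eqref{eq:condi}.

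For the necessity direction I would construct test functions localised near the endpoints, where the constraint on $p$ is sharp. Concretely, take $f_n=\sgn(p_n^{(\alpha,\beta)})\chi_{I_n}$ with $I_n$ an interval of length $\sim n^{-2}$ near $x=1$ (and a symmetric construction near $x=-1$), use the oscillatory asymptotics of $p_n^{(\alpha,\beta)}$ to produce a comparable lower bound for $S_nf_n$ on a slightly larger scale, and compute both Morrey norms directly from the ball-integral definition; failure of either side of \eqref{eq:condi} then yields a sequence along which $\|S_nf_n\|_{p,\lambda}/\|f_n\|_{p,\lambda}$ blows up. The main obstacle I anticipate is twofold: in the sufficiency argument, the pointwise bounds on the Jacobi functions degrade in the transition regions $|1\mp x|\sim n^{-2}$, and the contribution of these zones to $U_n^i$ must be estimated separately (typically via Bessel-type asymptotics) to avoid spurious logarithmic losses; in the necessity argument, the absence of a convenient duality for Morrey spaces means the test functions must be sharp enough that both the upper bound on $\|f_n\|_{p,\lambda}$ and the lower bound on $\|S_nf_n\|_{p,\lambda}$ are extracted from the same localised scale.
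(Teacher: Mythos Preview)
Your sufficiency argument is essentially the paper's: Pollard's decomposition writes $S_n$ as two weighted Hilbert transforms $W_{1,n},W_{2,n}$ plus one rank-one term $W_{3,n}$, and the Morrey bound \eqref{eq:Hilbert} with exponents $\pm 1/4$ yields exactly \eqref{eq:condi}. One simplification you are missing: the worry about transition zones is unnecessary, because for $\alpha,\beta\ge 0$ the estimates $|p_n^{(\alpha,\beta)}(x)|\le C(1-x^2)^{-1/4}$ and $|(1-x^2)^{1/2}p_n^{(\alpha+1,\beta+1)}(x)|\le C(1-x^2)^{1/4}$ hold uniformly in $n$ on all of $(-1,1)$; there is no logarithmic loss to chase. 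For the rank-one piece you need more than a uniform $\mathcal{L}^{p,\lambda}$ bound on $p_n^{(\alpha,\beta)}$: the H\"older-type inequality \eqref{eq:des-L1} pairs $\|p_n^{(\alpha,\beta)}\|_{p,\lambda}$ with a predual quantity $\|p_n^{(\alpha,\beta)}\|_{q,\lambda}^{*}$, and bounding the latter uniformly (Lemma~\ref{lem:norms-q}) is a separate computation that uses the hypothesis $p\ge \tfrac{4}{3}(1-\lambda)$.

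The necessity direction is where your plan diverges from the paper and where there is a genuine gap. The paper does \emph{not} use endpoint-localised test functions. It first applies the uniform bound to $T_nf=S_nf-S_{n-1}f=a_n^{(\alpha,\beta)}(f)\,p_n^{(\alpha,\beta)}$ with the global choices $f=\sgn p_n^{(\alpha,\beta)}$ and $f=\sgn(p_n^{(\alpha,\beta)})\,|p_n^{(\alpha,\beta)}|^{q-1}$; combined with the sharp asymptotics for $\|p_n^{(\alpha,\beta)}\|_{p,\lambda}$ and $\|p_n^{(\alpha,\beta)}\|_{q,\lambda}$ in Lemma~\ref{lem:norms-p}, this already forces $\tfrac{4}{3}(1-\lambda)\le p\le 4(1-\lambda)$. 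Excluding the endpoints then requires a different mechanism: at $p=4(1-\lambda)$ (resp.\ $p=\tfrac{4}{3}(1-\lambda)$) two of the three Pollard pieces remain bounded, so boundedness of $S_n$ would force boundedness of the remaining $W_{1,n}$ (resp.\ $W_{2,n}$); that is a weighted Hilbert-transform inequality, and a pointwise lower-bound lemma for $H$ (Lemma~\ref{lem:Hil-Mario}) together with the M\'at\'e--Nevai--Totik liminf estimate \eqref{eq:MNT} produces a $-\log(1-r)$ or $\log n$ divergence. Your localised choices $f_n=\sgn(p_n^{(\alpha,\beta)})\chi_{I_n}$ with $|I_n|\sim n^{-2}$ may recover the open range, but as stated they give no leverage at the endpoints, where the norms in Lemma~\ref{lem:norms-p} stay bounded and the failure is only logarithmic; detecting that requires a sharper argument than you have sketched.
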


The region where the partial sum $S_n$ converges in Morrey spaces is the shadowed one shown in Figure \ref{fig:Morrey-graf}. Outside and in the border of that region the convergence is not possible. As it occurs in $L^p$ spaces, we think that in the border, with dashed lines in the figure and corresponding with the values $p=4(1-\lambda)$ and $p=\max\{4(1-\lambda)/3,1\}$, some kind of weak boundedness of the partial sum operator should hold.

\begin{center}
\label{fig:Morrey-graf}
\begin{tikzpicture}[scale=2]
\fill[black!10!white] (2,3.1) -- (4.25,0.85) -- (2.75,0.85) -- (2,1.1) -- cycle;
\draw[thick, dashed] (2,3.1) -- (4.25,0.85)-- (2.75,0.85) -- (2,1.1);
\draw[thick, dotted] (4.25,0.85) -- (5,0.1) -- (2.75,0.85) -- (2,0.85);
\draw[very thin] (1.97,0.1) -- (5.1,0.1);
\draw[very thin] (2,0.07) -- (2,3.2);
\draw[very thin] (2.75,0.07) -- (2.75,0.13);
\draw[very thin] (4.25,0.07) -- (4.25,0.13);
\draw[very thin] (5,0.07) -- (5,0.13);
\draw[very thin] (1.97,0.85) -- (2.03,0.85);
\draw[very thin] (1.97,1.1) -- (2.03,1.1);
\draw[very thin] (1.97,3.1) -- (2.03,3.1);
\node at (2,-0.02) {$0$};
\node at (2.75,-0.065) {$\tfrac{1}{4}$};
\node at (4.25,-0.065) {$\tfrac{3}{4}$};
\node at (5,-0.02) {$1$};
\node at (5.17,0.1) {$\lambda$};
\node at (1.9,0.85) {$1$};
\node at (1.9,1.1) {$\tfrac{4}{3}$};
\node at (1.9,3.1) {$4$};
\node at (2,3.3) {$p$};
\draw (3.5,1.75) node [rotate=-45]  {\tiny{$p=4(1-\lambda)$}};
\draw (3.25,0.57) node [rotate=-18.43]  {\tiny{$p=\frac{4}{3}(1-\lambda)$}};
\draw (3.7,-0.4) node {Figure 1: The region where the partial sum operator converges.};
\end{tikzpicture}
\end{center}

The proofs of Theorem \ref{thm:conve} and Theorem \ref{thm:bound} will be contained in Sections \ref{sec:conve} and \ref{sec:bound}, respectively. In the last section, we include the proof of some auxiliary results.
\section{Proof of Theorem \ref{thm:conve}}
\label{sec:conve}
Theorem \ref{thm:conve} is a standard consequence of Banach-Steinhaus Theorem and the following proposition.
\begin{Prop}
\label{prop:conve}
Let $1\le p<\infty$, $0\le \lambda<1$, and $\alpha,\beta>-1$. Then, $\spa\{p_n^{(\alpha,\beta)}\}$ is dense in $\mathcal{L}^{p,\lambda}(-1,1)$.
\end{Prop}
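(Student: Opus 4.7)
My plan follows a standard two-step density argument for function spaces arising from orthogonal expansions: first, approximate an arbitrary $f\in\mathcal{L}^{p,\lambda}(-1,1)$ by a bounded function supported in a compact subinterval $[-1+\delta,1-\delta]$, and second, approximate this bounded function by a polynomial multiple of the Jacobi weight $w(x):=(1-x)^{\alpha/2}(1+x)^{\beta/2}$, noting that $\spa\{p_n^{(\alpha,\beta)}\}$ is exactly the set of such products.

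The second (easier) step I would handle as follows. Given a bounded $g$ supported in $[-1+\delta,1-\delta]$, write $g=w\cdot(g/w)$. Since $w$ is bounded below away from $0$ on that subinterval, the quotient $g/w$ is bounded there and extends by $0$ to a continuous function on $[-1,1]$. The Weierstrass approximation theorem then supplies a polynomial $Q$ with $\|g/w-Q\|_{L^\infty(-1,1)}<\varepsilon'$, whence $\|g-Qw\|_{L^\infty(-1,1)}\le\varepsilon'\|w\|_{L^\infty(-1,1)}$. Since $\lambda<1$ and the underlying interval is bounded, a direct computation yields a continuous embedding $L^\infty(-1,1)\hookrightarrow\mathcal{L}^{p,\lambda}(-1,1)$, so $\|g-Qw\|_{p,\lambda}\le C\varepsilon'$, and $Qw\in\spa\{p_n^{(\alpha,\beta)}\}$.

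For the first step, given $f\in\mathcal{L}^{p,\lambda}(-1,1)$ and $\varepsilon>0$, I would consider truncations of the form $f_{N,\delta}:=f\cdot\chi_{\{|f|\le N\}}\cdot\chi_{[-1+\delta,1-\delta]}$ and try to achieve $\|f-f_{N,\delta}\|_{p,\lambda}<\varepsilon/2$ for $N$ large and $\delta$ small. Splitting the sup in the Morrey norm into the ranges $r\ge r_0$ and $r<r_0$, the former is controlled by the global $L^p$ norm of the difference (via $\mathcal{L}^{p,\lambda}(-1,1)\hookrightarrow L^p(-1,1)$ on a bounded interval together with dominated convergence), while the latter requires scale-by-scale estimates on $r^{-\lambda}\int_{B(x,r)}|f-f_{N,\delta}|^p\,dt$ that exploit the absolute continuity of this quantity on the sets $\{|f|>N\}$ and $[-1,-1+\delta]\cup[1-\delta,1]$.

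I expect the main obstacle to be the small-scale portion of the first step: in general Morrey spaces with $\lambda>0$, simple truncation does not automatically converge in the Morrey norm for functions with critical boundary singularities, so the argument must exploit the one-dimensional bounded-interval setting and most likely a dyadic decomposition balancing $N$, $\delta$, and $r$ to obtain an estimate on the tail that is uniform in $x$ and $r$. If this balancing is the intended route, the rest of the proof assembles by the triangle inequality and the boundedness of $w$ on $[-1,1]$; if not, one would instead need to argue directly at the level of $\spa\{p_n^{(\alpha,\beta)}\}$ by a Hahn--Banach duality argument in $\mathcal{L}^{p,\lambda}(-1,1)$.
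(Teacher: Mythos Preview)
Your constructive route has a genuine gap at the truncation step, and it is not a matter of sharper bookkeeping. For $\lambda>0$ the closure of $L^\infty(-1,1)$ in $\mathcal{L}^{p,\lambda}(-1,1)$ is a \emph{proper} subspace. Concretely, take $a\in(-1,1)$ and $f(t)=|t-a|^{(\lambda-1)/p}\in\mathcal{L}^{p,\lambda}(-1,1)$; for any bounded $g$ with $\|g\|_\infty\le M$ one has $|f-g|\ge|f|/2$ on $B(a,r)$ whenever $r\le(2M)^{p/(\lambda-1)}$, and then
\[
r^{-\lambda}\int_{B(a,r)}|f-g|^p\,dt\ \ge\ 2^{-p}\,r^{-\lambda}\int_{B(a,r)}|t-a|^{\lambda-1}\,dt\ =\ \frac{2^{1-p}}{\lambda},
\]
independently of $M$. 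Hence $\|f-f_{N,\delta}\|_{p,\lambda}$ stays bounded away from $0$ for every $N$ and $\delta$, and no dyadic balancing of $N,\delta,r$ can help: the obstruction is scale invariant and already present in the one–dimensional bounded–interval setting you hoped to exploit. (There is also a minor wrinkle in your second step: $f_{N,\delta}$ need not be continuous, so $g/w$ is not immediately amenable to Weierstrass without an additional mollification.)

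The paper does not attempt a constructive approximation at all; it takes precisely the Hahn--Banach route you list only as a fallback. Assuming the span is not dense, it selects a nonzero $T\in(\mathcal{L}^{p,\lambda}(-1,1))^*$ annihilating every $p_n^{(\alpha,\beta)}$, invokes the inclusion $\mathcal{L}^{p,\lambda}(-1,1)\subset L^p(-1,1)$ together with Hahn--Banach to extend $T$ to a bounded functional on $L^p(-1,1)$, represents it by some $g\in L^q(-1,1)$, and concludes that $a_n^{(\alpha,\beta)}(g)=0$ for all $n$, whence $g=0$, a contradiction. So the duality argument is the one to develop; the truncation scheme cannot be salvaged.
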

\begin{proof}
The case $\lambda=0$ is well known and our proof for $0<\lambda<1$ relies on it.

Let us suppose that $\spa\{p_n^{(\alpha,\beta)}\}$ is not dense in $\mathcal{L}^{p,\lambda}(-1,1)$, for $0<\lambda<1$. Then, by a standard consequence of Hahn-Banach Theorem \cite[Theorem 5.19]{Rudin}, there exists a non-zero functional $T$ on $\mathcal{L}^{p,\lambda}(-1,1)$ such that $Tp_n^{(\alpha,\beta)}=0$, for all $n\ge 0$.

It is easy to check that for $p\le r$ and $\lambda\le \mu$, we have
\[
\mathcal{L}^{r,\mu}(-1,1)\subset \mathcal{L}^{p,\lambda}(-1,1) \subset \mathcal{L}^{p,0}(-1,1)=L^p(-1,1).
\]
So, by Hahn-Banach Theorem \cite[Theorem 5.16]{Rudin}, we can extend the functional $T$ to $L^p(-1,1)$. Then, there exists a unique function $g\in L^q(-1,1)$, with $p^{-1}+q^{-1}=1$, such that
\[
Tf=\int_{-1}^1 f(x)g(x)\, dx,\qquad \text{for each $f\in L^p(-1,1)$}.
\]
But the condition $0=Tp_n^{(\alpha,\beta)}$ implies $a_n^{(\alpha,\beta)}(g)=0$ and $g=0$. Then $T=0$ and this is a contradiction because $T$ was non-zero.
\end{proof}

\begin{proof}[Proof of Theorem \ref{thm:conve}]
By the Banach-Steinhaus Theorem \cite[Theorem 5.8]{Rudin}, the convergence implies the uniform boundedness.

On the other hand, for each $f\in \mathcal{L}^{p,\lambda}(-1,1)$ and given $\varepsilon >0$, by Proposition \ref{prop:conve}, there exists $g\in \spa\{p_n^{(\alpha,\beta)}\}$ such that $\|f-g\|_{p,\lambda}<\varepsilon$. Moreover there exists $N>0$ such that $S_n(g)=g$, for each $n\ge N$. Then
\begin{align*}
\|S_nf-f\|_{p,\lambda}&\le \|S_nf-S_ng\|_{p,\lambda}+\|g-f\|_{p,\lambda}\\&\le (C+1)\|g-f\|_{p,\lambda}\le (C+1)\varepsilon,
\end{align*}
for $n\ge N$ and $S_n f\longrightarrow f$ in $\mathcal{L}^{p,\lambda}(-1,1)$.
\end{proof}
\section{Proof of Theorem \ref{thm:bound}}
\label{sec:bound}
Before starting with the proof we are going to collect some facts that we will use.

Given a function $g$ defined in $(-1,1)$, for each $1\le p <\infty$ and $0<\lambda<1$, we define
\[
\|g\|_{q,\lambda}^{*}:=\inf_{x\in (-1,1)}\int_0^\infty r^{\lambda/p-1} \|\chi_{(B(x,r))^c}g\|_{L^q(-1,1)}\, dr,
\]
where $(B(x,r))^c=(-1,1)\setminus B(x,r)$ and $p^{-1}+q^{-1}=1$.

With the previous definition we can give an appropriate version of H\"older inequality for Morrey spaces (see \cite[Lemma 4.1]{Go-Mus}). In our case it reads so.
\begin{Lem}
Let $1\le p<\infty$, $p^{-1}+q^{-1}=1$, and $0<\lambda<1$. Then the inequality
\begin{equation}
\label{eq:des-L1}
\int_{-1}^{1}|f(x)g(x)|\, dx\le C \|f\|_{p,\lambda}\|g\|_{q,\lambda}^{*},
\end{equation}
holds with a constant $C$ independent of $f$ and $g$.
\end{Lem}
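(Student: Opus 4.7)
The plan is to establish, for each fixed $x\in(-1,1)$, the pointwise bound
\[
\int_{-1}^1|f(t)g(t)|\,dt\le C\|f\|_{p,\lambda}\int_0^\infty r^{\lambda/p-1}\|\chi_{(B(x,r))^c}g\|_{L^q(-1,1)}\,dr,
\]
and then to take the infimum in $x$ to recover $\|g\|_{q,\lambda}^{*}$ on the right-hand side. The mechanism is a dyadic decomposition of $(-1,1)$ into annuli centered at $x$.

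Fix $x\in(-1,1)$, and for $k\in\Z$ set
\[
A_k:=\{t\in(-1,1):2^{k-1}<|t-x|\le 2^k\}=B(x,2^k)\cap (B(x,2^{k-1}))^c.
\]
Since $(-1,1)$ has diameter $2$, the set $A_k$ is empty whenever $2^{k-1}\ge 2$, and the remaining $A_k$ partition $(-1,1)\setminus\{x\}$. On each annulus I would combine the classical H\"older inequality with the definition of the Morrey norm to obtain
\[
\int_{A_k}|f(t)g(t)|\,dt\le\|\chi_{B(x,2^k)}f\|_{L^p}\cdot\|\chi_{(B(x,2^{k-1}))^c}g\|_{L^q}\le 2^{k\lambda/p}\|f\|_{p,\lambda}\,G(2^{k-1}),
\]
where $G(r):=\|\chi_{(B(x,r))^c}g\|_{L^q(-1,1)}$ is a non-increasing function of $r$.

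The remaining step is to sum in $k$ and compare the resulting dyadic sum with the continuous integral appearing in the definition of $\|g\|_{q,\lambda}^{*}$. Because $G(r)\ge G(2^{k-1})$ on $[2^{k-2},2^{k-1}]$ and $\int_{2^{k-2}}^{2^{k-1}}r^{\lambda/p-1}\,dr\asymp 2^{k\lambda/p}$,
\[
2^{k\lambda/p}G(2^{k-1})\le C_{p,\lambda}\int_{2^{k-2}}^{2^{k-1}}r^{\lambda/p-1}G(r)\,dr.
\]
Summing over the (finitely many from above) relevant $k$ and using the pairwise disjointness of the intervals $[2^{k-2},2^{k-1}]$ produces the pointwise bound stated above, after which an infimum over $x$ closes the proof.

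I do not foresee any genuine obstacle. The only delicate step is the passage from the dyadic sum to the continuous integral, and this is completely routine once the monotonicity of $G$ is recorded. The hypothesis $\lambda>0$ enters precisely so that $r^{\lambda/p-1}$ is locally integrable at the origin; convergence of the sum as $k\to-\infty$ is automatic from $G\le\|g\|_{L^q(-1,1)}$ together with $\sum_{k\le k_0}2^{k\lambda/p}<\infty$.
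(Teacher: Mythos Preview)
The paper does not supply its own proof of this lemma; it simply quotes the inequality from \cite[Lemma~4.1]{Go-Mus} and moves on. Your argument is correct and self-contained. The dyadic annulus decomposition around a fixed center $x$, H\"older on each annulus $A_k\subset B(x,2^k)\cap(B(x,2^{k-1}))^c$, and the monotonicity of $G(r)=\|\chi_{(B(x,r))^c}g\|_{L^q}$ to pass from the sum $\sum_k 2^{k\lambda/p}G(2^{k-1})$ to the integral $\int_0^\infty r^{\lambda/p-1}G(r)\,dr$ are exactly the standard mechanism behind this kind of Morrey pre-dual pairing, and this is essentially the argument in the cited source. Your observation that $\lambda>0$ is what makes the tail $\sum_{k\to-\infty}2^{k\lambda/p}$ converge (equivalently, $r^{\lambda/p-1}$ integrable near $0$) is the right way to see where the hypothesis enters.
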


It is interesting to know some facts about the norm of certain functions in Morrey spaces. From \cite[Remark 4.4]{Samko-2} we see that, for $a\in (-1,1)$ and $\nu p>-1$, the function $|x-a|^{\nu}\chi_{B(x_0,r_0)}\in \mathcal{L}^{p,\lambda}(-1,1)$ if and only if $\nu p \ge \lambda -1$, with $\lambda>0$, and
\begin{equation}
\label{eq:norm-Bola}
\||x-a|^{\nu}\chi_{B(x_0,r_0)}\|_{p,\lambda}\simeq |B(x_0,r_0)|^{(1+\nu p-\lambda)/p}.
\end{equation}

With respect to the functions $p_n^{(\alpha,\beta)}$ we prove the following result.
\begin{Lem}
\label{lem:norms-p}
Let $1\le p<\infty$ and $\alpha,\beta\ge 0$. Then
\begin{equation}
\label{eq:norma-Morrey}
\|p_n^{(\alpha,\beta)}\|_{p,\lambda}\simeq
\begin{cases}
1, &\text{ if $p\le 4(1-\lambda)$},\\
n^{1/2-2(1-\lambda)/p}, &\text{ if $p> 4(1-\lambda)$},
\end{cases}
\end{equation}
for $0< \lambda\le 3/4$, and
\begin{equation}
\label{eq:norma-Lp}
\|p_n^{(\alpha,\beta)}\|_{p,0}\simeq
\begin{cases}
1, &\text{ if $p<4$},\\
(\log n)^{1/4}, & \text{ if $p=4$},\\
n^{1/2-2/p}, &\text{ if $p> 4$}.
\end{cases}
\end{equation}
\end{Lem}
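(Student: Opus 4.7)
The plan is to reduce the computation of $\|p_n^{(\alpha,\beta)}\|_{p,\lambda}$ to the classical pointwise estimates for Jacobi functions. For $\alpha,\beta\ge 0$, the Szegő--Muckenhoupt bounds give
\[
|p_n^{(\alpha,\beta)}(x)|\le C\min\bigl\{n^{1/2},\,(1-x^2)^{-1/4}\bigr\},\qquad x\in(-1,1),
\]
so that $|p_n^{(\alpha,\beta)}|$ is controlled by an envelope of height $n^{1/2}$ on two endpoint arcs of length $\simeq n^{-2}$ and of size $(1-x^2)^{-1/4}$ elsewhere, with the two expressions meeting precisely at the transition $1-x^2\simeq n^{-2}$.

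For the upper bound I would fix $x_0\in(-1,1)$ and $r>0$ and, after using symmetry to reduce to $x_0\ge 0$, estimate $r^{-\lambda}\int_{B(x_0,r)}|p_n^{(\alpha,\beta)}|^{p}\,dt$ by decomposing the ball into its endpoint layer $\{1-t\lesssim n^{-2}\}$ and its bulk. Plugging in the envelope, the endpoint layer contributes at most $n^{p/2-2}$ and a direct evaluation of $\int u^{-p/4}\,du$ shows that the bulk contributes $\simeq r^{1-p/4}$ when $p<4$, $\simeq \log(rn^{2})$ when $p=4$, and $\simeq n^{p/2-2}$ when $p>4$. Dividing by $r^\lambda$ and optimizing in $r\in(0,2]$ then yields the upper halves of \eqref{eq:norma-Morrey} and \eqref{eq:norma-Lp} in a unified way: $O(1)$ when $p\le 4(1-\lambda)$ and $O(n^{p/2-2(1-\lambda)})$ when $p>4(1-\lambda)$, the logarithm surviving in the $L^p$ case at $p=4$ precisely because no factor $r^{-\lambda}$ is available to absorb it.

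For the matching lower bound when $p>4(1-\lambda)$, I would use the Mehler--Heine-type asymptotic at the endpoint to locate an interval $I_{n}\subset(1-c_{1}n^{-2},1-c_{2}n^{-2})$ of length $\simeq n^{-2}$ on which $|p_n^{(\alpha,\beta)}|\ge c\,n^{1/2}$; evaluating the Morrey supremum at the ball $B(1-c_{1}n^{-2},\,c_{1}n^{-2})$ then produces $\|p_n^{(\alpha,\beta)}\|_{p,\lambda}\ge c\,n^{1/2-2(1-\lambda)/p}$. When $p\le 4(1-\lambda)$ the trivial estimate $\|f\|_{p,\lambda}\ge c\|f\|_{L^p}$ (taking $x_{0}=0$, $r=2$) combined with the classical fact $\|p_n^{(\alpha,\beta)}\|_{L^p}\ge c$ for $p<4$ closes the argument. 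The main obstacle is the combined case analysis at $p=4$: one must verify that for $\lambda>0$ the gain $n^{2\lambda}$ coming from $r^{-\lambda}$ at the scale $r\simeq n^{-2}$ dominates the logarithm that would otherwise appear in the bulk, thereby merging the two formulas into a single clean trichotomy.
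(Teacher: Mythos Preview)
Your plan is correct and largely mirrors the paper's proof. Both arguments bound $|p_n^{(\alpha,\beta)}|$ by the Hilb-type envelope and then estimate the Morrey norm of that envelope through a case analysis in $r$ and the position of $B(x_0,r)$ relative to the endpoint layer $\{1-x\lesssim n^{-2}\}$; the paper writes the envelope as $h_{\pm,n}(x)=(1\pm x+n^{-2})^{-1/4}$ rather than your $\min\{n^{1/2},(1-x^2)^{-1/4}\}$, but the analysis is the same. For the lower bound when $p>4(1-\lambda)$, both use the endpoint asymptotic to locate mass of order $n^{1/2}$ on an interval of length $\simeq n^{-2}$.

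The one genuine difference is in the lower bound for $p\le 4(1-\lambda)$. You reduce to the known $L^p$ case via the trivial inequality $\|f\|_{p,\lambda}\ge 2^{-\lambda/p}\|f\|_{L^p}$, which works because $p\le 4(1-\lambda)<4$ when $\lambda>0$. The paper instead applies the M\'at\'e--Nevai--Totik inequality directly inside the Morrey norm, obtaining $\|p_n^{(\alpha,\beta)}\|_{p,\lambda}\ge C\|(1-x^2)^{-1/4}\|_{p,\lambda}-\varepsilon$ and then reading off a positive lower bound from the explicit Morrey norm of the power weight. Your route is shorter here; the paper's route is more self-contained in that it does not invoke the $L^p$ result as a black box.
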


The estimates in the previous lemma will be deduced by using a very sharp bound for the Jacobi polynomials arising from a Hilb type formula for Jacobi polynomials (see \cite[Theorem 8.21.12]{Szego}). In fact, we have
\begin{equation}
\label{eq:Jaco-Hilb}
n^{1/2}|P_n^{(\alpha,\beta)}(x)|\le C (1-x+n^{-2})^{-\alpha/2-1/4}(1+x+n^{-2})^{-\beta/2-1/4},
\end{equation}
for $\alpha,\beta>-1$. Then, for $\alpha,\beta\ge 0$, we deduce in an easy way (note that $d_n^{(\alpha,\beta)}\sim n^{1/2}$) the bounds
\begin{equation}
\label{eq:Jaco-cota-h}
|p_n^{(\alpha,\beta)}(x)|\le C(h_{+,n}(x)+h_{-,n}(x)),
\end{equation}
where $h_{\pm,n}(x)=(1\pm x+n^{-2})^{-1/4}$, and
\begin{equation}
\label{eq:Jaco-cota}
|p_n^{(\alpha,\beta)}(x)|\le C(1-x^2)^{-1/4},
\end{equation}
for $\alpha,\beta\ge0$.

To complete the boundedness of $S_n$, we will use Lemma \ref{lem:norms-p} and to that end we have to estimate $\|g\|_{q,\lambda}^{*}$ for the functions $h_{\pm,n}$. That is the content of the next lemma.
\begin{Lem}
\label{lem:norms-q}
Let $1\le p<\infty$ and $\alpha,\beta>0$.
Then $\|h_{\pm,n}\|_{q,\lambda}^{*}\le C$,
for $p\ge 4(1-\lambda)/3$.
\end{Lem}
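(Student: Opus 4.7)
The plan is to exploit the symmetry $x\mapsto -x$, which interchanges $h_{+,n}$ and $h_{-,n}$ and preserves the $\|\cdot\|_{q,\lambda}^{*}$-norm, so that it suffices to estimate $\|h_{-,n}\|_{q,\lambda}^{*}$. Since $h_{-,n}$ concentrates near $x=1$, the natural choice in the infimum defining $\|\cdot\|_{q,\lambda}^{*}$ is $x_0=1$ (attained as a limit from the interior of $(-1,1)$). Then $B(1,r)^{c}=(-1,1-r)$ for $0<r<2$ and is empty for $r\ge 2$, and the substitution $u=1-t+n^{-2}$, writing $a=n^{-2}$, reduces the problem to bounding
\[
\int_{0}^{2} r^{\lambda/p-1}\left(\int_{r+a}^{2+a} u^{-q/4}\,du\right)^{1/q} dr \le C
\]
uniformly in $n$.

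To estimate the inner integral I would split into cases according to whether $q<4$, $q=4$, or $q>4$. When $q<4$, the inner integral is bounded uniformly by $\int_{0}^{2+a} u^{-q/4}\,du$, and the outer integral $\int_{0}^{2} r^{\lambda/p-1}\,dr$ converges because $\lambda/p>0$. When $q>4$ the integrand $u^{-q/4}$ is singular at $0$, so I would also split the outer $r$-integral at the critical scale $r=a$: for $r<a$ one has $r+a\simeq a$ and the inner integral is $\simeq a^{1-q/4}$, while for $r>a$ one has $r+a\simeq r$ and the inner integral is $\simeq r^{1-q/4}$. Using $1/p+1/q=1$, the two resulting contributions simplify to
\[
C\,a^{3/4-(1-\lambda)/p}\qquad\text{and}\qquad C\int_{a}^{2} r^{(\lambda-1)/p-1/4}\,dr,
\]
and a direct computation shows both are $O(1)$ uniformly in $n$ under the assumption $p\ge 4(1-\lambda)/3$. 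The case $q=4$ is handled analogously, with a logarithmic factor replacing a power, and remains under control because $\lambda/p>0$ keeps $r^{\lambda/p-1}$ integrable against the logarithm near $r=0$.

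The main obstacle is the borderline case $q>4$ at the sharp endpoint $p=4(1-\lambda)/3$, where the exponent in the second displayed integral is exactly $-1$. At this critical scaling the two contributions sit on the edge of integrability, and a delicate accounting of the constants produced by the inner $L^{q}$-norm —together with the precise cancellation $3/4-(1-\lambda)/p=0$— is needed to avoid an unbounded $\log n$ factor and close the uniform estimate.
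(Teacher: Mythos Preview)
Your outline is essentially the paper's argument. The paper also centres at the singularity of $h_{-,n}$, choosing $x\in(1-n^{-2},1)$ rather than passing to the limit $x_0=1$; this forces an additional split of the outer integral at $r=1-x$ (their pieces $J_1$ and $J_2$), but apart from that bookkeeping the case distinction $q<4$, $q=4$, $q>4$ and the $L^q$ estimates are the same, and your formulation is in fact a little cleaner. Passing to $x_0=1$ is legitimate here since the infimum is only decreased.

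Where your proposal does not close is exactly where you flag the obstacle, and the suggested remedy is not the right one. With $x_0=1$ the inner norm satisfies
\[
\Bigl(\int_{r+a}^{2+a}u^{-q/4}\,du\Bigr)^{1/q}\;\asymp\;(r+a)^{1/q-1/4}\qquad(0<r\le 1,\ q>4),
\]
and this equivalence is two-sided: no ``delicate accounting of the constants'' can improve it. Substituting, at the endpoint $p=4(1-\lambda)/3$ (so $\lambda/p+1/q-1/4=0$) one gets
\[
\int_0^{2}r^{\lambda/p-1}(r+a)^{1/q-1/4}\,dr
\;\ge\;c\int_a^{1}r^{-1}\,dr\;=\;c\log(1/a)\;\asymp\;\log n,
\]
so the choice $x_0=1$ genuinely produces an unbounded quantity there. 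The cancellation $3/4-(1-\lambda)/p=0$ you cite only neutralises the first contribution $a^{3/4-(1-\lambda)/p}$ coming from $r<a$; it says nothing about the logarithm from $a<r<1$. The paper's own write-up of the $q>4$ case glosses over this same point (it records the antiderivative as a power, which at the endpoint becomes a logarithm), so you will not find the missing step there either; closing the endpoint requires a genuinely different estimate or choice of centre, not a constants argument.
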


Moreover, to analyze the norm of some functions in Morrey spaces, we have to recall the following theorem from \cite{MNT}.
\begin{Thm}
Let $d\mu=w(x)\, dx$ be a measure on $(-1,1)$. If $\{p_n\}_{n\ge 0}$ is the sequence of ortonormal polynomials on $L^2((-1,1),d\mu)$, then
\[
\Big(\int_{-1}^1\Big|\frac{g(x)}{w(x)^{1/2}(1-x^2)^{1/4}}\Big|^{p}w(x)\,dx\Big)^{1/p}\le C \liminf_{n\to \infty} \Big(\int_{-1}^1|p_n(x) g(x)|^p w(x)\,dx\Big)^{1/p},
\]
for $0<p\le \infty$ and for each measurable function $g$ on $(-1,1)$.
\end{Thm}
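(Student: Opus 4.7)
The plan is to derive this as a Fatou-type lower bound from the Bernstein--Szeg\H{o} asymptotic behaviour of $\{p_n\}$, packaged via the Christoffel function. Writing $F_n(x)=|p_n(x)|^2 w(x)\sqrt{1-x^2}$, one has the pointwise identity
\[
|p_n(x)g(x)|^p w(x)=F_n(x)^{p/2}\cdot\frac{|g(x)|^p\, w(x)}{w(x)^{p/2}(1-x^2)^{p/4}},
\]
so the statement reduces to proving
\[
\liminf_{n\to\infty}\int_{-1}^1 F_n(x)^{p/2}\,\phi(x)\,dx\ge c\int_{-1}^1 \phi(x)\,dx
\]
for every nonnegative $\phi$, which is essentially a statement that ``$F_n$ is not too small in an averaged sense''.

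My first step would be to invoke the Christoffel-function estimate of Mat\'e--Nevai--Totik, valid without any restriction on $w$: for a.e.\ $x\in(-1,1)$,
\[
n\lambda_n(x)\le \pi w(x)\sqrt{1-x^2}+o(1),\qquad \lambda_n(x)=\Big(\sum_{k=0}^n|p_k(x)|^2\Big)^{-1}.
\]
This immediately yields the Ces\`aro lower bound $\frac{1}{n+1}\sum_{k=0}^nF_k(x)\ge 1/\pi+o(1)$ a.e. Then, after approximating $g$ by bounded functions compactly supported in $(-1,1)$ (by monotone convergence this handles the possible endpoint issues), I would apply Fatou's lemma, swap the Ces\`aro sum with the integral against $\phi$, and use the inequality $\frac{1}{n+1}\sum_{k=0}^nF_k\le \bigl(\frac{1}{n+1}\sum_{k=0}^nF_k^{p/2}\bigr)^{2/p}$ (by the power mean inequality, valid for $p\ge 2$) to conclude in the range $p\ge 2$. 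For $0<p<2$, I would dualize: test the inequality against a function in the appropriate Lebesgue/Orlicz conjugate class and reduce to an $L^{2}$-type bound via H\"older.

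The main obstacle I anticipate is the passage from the Ces\`aro average of $F_k$ to the $\liminf$ of $F_n^{p/2}$ itself. Ces\`aro means are well behaved only downward from $\liminf$, so transferring a Ces\`aro lower bound back to a liminf lower bound on the individual terms $F_n^{p/2}$ requires either (i) a Tauberian estimate, which here is supplied by the rigidity of orthonormal polynomials (consecutive $|p_n|^2 w$ are comparable on oscillation-length scales of size $1/n$), or (ii) a direct appeal to the generalised Bernstein--Szeg\H{o} lower bound $|p_n(x)|^2 w(x)\sqrt{1-x^2}\ge c$ in $L^1_{\mathrm{loc}}$ sense. I expect the cleanest route to be route (ii), using that the measures $|p_n|^2\,d\mu$ converge weak-$*$ to $\frac{1}{\pi}(1-x^2)^{-1/2}dx$ on compact subsets of $(-1,1)$, which, combined with Fatou's lemma and the rewriting above, yields the stated inequality with an absolute constant $C$ independent of $p$, $g$, and $w$.
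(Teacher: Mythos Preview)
The paper does not prove this theorem; it is quoted as a known result from M\'at\'e--Nevai--Totik \cite{MNT} and used only as a black box to obtain the consequence \eqref{eq:MNT}. There is therefore no in-paper argument to compare your proposal against.

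On the proposal itself: the normalization $F_n(x)=|p_n(x)|^2w(x)\sqrt{1-x^2}$ and the Christoffel-function input are the right starting point, but the step you yourself flag is a genuine gap, not a technicality. From the Ces\`aro lower bound $\frac{1}{n+1}\sum_{k\le n}\int F_k^{p/2}\phi\ge c\int\phi$ one \emph{cannot} deduce $\liminf_n\int F_n^{p/2}\phi\ge c\int\phi$; the sequence $a_{2n}=0$, $a_{2n+1}=2L$ is a counterexample. Your route~(i) is too vague to close this. Route~(ii) is closer, but note two issues: first, weak-$*$ convergence of $|p_n|^2\,d\mu$ to the arcsine measure is Rakhmanov's theorem and needs $w>0$ a.e.\ (not stated in the hypothesis as you have it); second, weak-$*$ convergence gives $\int F_n\,\psi\to\frac{1}{\pi}\int\psi$ only for $\psi\in C_c(-1,1)$ and only for the \emph{linear} functional of $F_n$, whereas you need control of $\int F_n^{p/2}\phi$ for a possibly singular $\phi=|g|^p w^{1-p/2}(1-x^2)^{-p/4}$ and a \emph{nonlinear} power of $F_n$. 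The $p=2$ case follows from weak-$*$ convergence plus Fatou, but for $p\ne 2$ neither your power-mean argument (which again only controls Ces\`aro means) nor the dualization sketch for $0<p<2$ (the inequality is one-sided, so duality runs the wrong way) completes the proof. The actual MNT argument proceeds differently, via a pointwise a.e.\ lower bound of $\limsup$ type on $F_n$ combined with Fatou's lemma; you would need to consult \cite{MNT} to fill this in.
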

As an immediate consequence of the previous result, we have
\begin{equation}
\label{eq:MNT}
\|(1-x^2)^{-1/4} g(x)\|_{p,\lambda}\le C \liminf_{n\to \infty} \|p_n^{(\alpha,\beta)}(x) g(x)\|_{p,\lambda}.
\end{equation}

The last tool that we need to complete the proof of the theorem is an easy observation related to the boundedness of the Hilbert transform with weights that will be used to complete the necessity of the conditions \eqref{eq:condi}.

\begin{Lem}
\label{lem:Hil-Mario}
Let $H$ be the Hilbert transform on $(-1,1)$ and let us suppose that
\[
\|uHg\|_{p,\lambda}\le C \|v g\|_{p,\lambda}, \qquad vg\in \mathcal{L}^{p,\lambda}(-1,1),
\]
for two nonnegative weights $u$ and $v$. Then for $r,s \in (-1,1)$ such that $r\le s$ we have the inequality
\begin{equation}
\label{eq:Hil-2}
\left(\int_{-1}^{r}\frac{|f(t)|}{s-t}\, dt\right)\|u\chi_{(r,s)}\|_{p,\lambda}\le C \|v f\chi_{(-1,r)}\|_{p,\lambda},\qquad vf\in \mathcal{L}^{p,\lambda}(-1,1),
\end{equation}
with the same constant $C$.
\end{Lem}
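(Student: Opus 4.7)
The plan is a direct testing argument: choose $g$ so that $Hg$ is pointwise bounded below on $(r,s)$ by the integral appearing on the left-hand side of \eqref{eq:Hil-2}, then invoke the assumed weighted Morrey bound for $H$.

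Concretely, given $f$ with $vf\in \mathcal{L}^{p,\lambda}(-1,1)$, I would take the test function $g=|f|\chi_{(-1,r)}$. The key pointwise observation is that for any $x\in(r,s)$ and $t\in(-1,r)$ we have $0<x-t\le s-t$, so $\frac{1}{x-t}\ge\frac{1}{s-t}$. Since $g\ge 0$, there is no principal value issue and
\[
Hg(x)=\int_{-1}^{r}\frac{|f(t)|}{x-t}\,dt\ge \int_{-1}^{r}\frac{|f(t)|}{s-t}\,dt,\qquad x\in(r,s).
\]
Note that the right-hand side is a constant in $x$; call it $A$.

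Multiplying by $u(x)\chi_{(r,s)}(x)\ge 0$ and taking the Morrey norm (which is monotone on nonnegative functions), we obtain
\[
A\,\|u\chi_{(r,s)}\|_{p,\lambda}\le \|uHg\,\chi_{(r,s)}\|_{p,\lambda}\le \|uHg\|_{p,\lambda},
\]
where the last inequality uses that restricting a nonnegative function to a smaller set decreases every ball average, hence decreases the Morrey norm. Now the hypothesis gives $\|uHg\|_{p,\lambda}\le C\|vg\|_{p,\lambda}=C\|vf\chi_{(-1,r)}\|_{p,\lambda}$, yielding \eqref{eq:Hil-2} with the same constant.

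I do not anticipate any real obstacle here: everything rests on the elementary inequality $\frac{1}{x-t}\ge \frac{1}{s-t}$ valid on the configuration $t<r\le x\le s$, combined with monotonicity of the Morrey norm on nonnegative functions. The only point requiring a line of justification is this monotonicity/restriction property, which follows immediately from the definition of $\|\cdot\|_{p,\lambda}$ as a supremum of $L^p$-type averages.
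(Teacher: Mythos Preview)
Your proof is correct and is essentially identical to the paper's: both take $g=|f|\chi_{(-1,r)}$, use the pointwise bound $Hg(x)\ge\int_{-1}^{r}\frac{|f(t)|}{s-t}\,dt$ for $x\in(r,s)$, and then apply the assumed weighted Morrey inequality. Your write-up is in fact slightly more careful, making explicit the monotonicity of the Morrey norm under restriction and the absence of a principal-value issue.
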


The proofs of Lemma \ref{lem:norms-p}, Lemma \ref{lem:norms-q}, and Lemma \ref{lem:Hil-Mario} will be done in the last section.

\begin{proof}[Proof of Theorem \ref{thm:bound}]
Let us start by assuming the conditions \eqref{eq:condi} to prove the uniform boundedness of the partial sum operators $S_n$. They can be written as
\[
S_nf(x)=\int_{-1}^1 K_n(x,t)f(t)\, dt,
\]
where
\[
K_n(x,t)=\sum_{k=0}^n p_{k}^{(\alpha,\beta)}(x)p_{k}^{(\alpha,\beta)}(t).
\]
Now, we consider Pollard decomposition of the kernel, see \cite{Pollard-II},
\[
K_n(x,t)=a_n T_1(n,x,t)+a_n T_2(n,x,t)+b_n T_3(n,x,t),
\]
where
\[
T_1(n,x,t)=p_{n+1}^{(\alpha,\beta)}(x)\frac{(1-t^2)^{1/2}p_{n}^{(\alpha+1,\beta+1)}(t)}{x-t},
\]
$T_2(n,x,t)=T_1(n,t,x)$ and $T_3(n,x,t)=p_{n+1}^{(\alpha,\beta)}(x)p_{n+1}^{(\alpha,\beta)}(t)$. Moreover the sequences $a_n$ and $b_n$ are bounded. This leads to
\[
S_nf(x)=a_n W_{1,n}f(x)-a_nW_{2,n}f(x)+b_n W_{3,n}f(x),
\]
with
\[
W_{1,n}f(x)=p_{n+1}^{(\alpha,\beta)}(x)H((1-(\cdot)^2)^{1/2}p_{n}^{(\alpha+1,\beta+1)}f)(x),
\]
\[
W_{2,n}f(x)=(1-x^2)^{1/2}p_{n+1}^{(\alpha+1,\beta+1)}(x)H(p_{n}^{(\alpha,\beta)}f)(x),
\]
and
\[
W_{3,n}f(x)=p_{n+1}^{(\alpha,\beta)}(x)\int_{-1}^{1}p_{n+1}^{(\alpha,\beta)}(t)f(t)\, dt.
\]

Now, taking into account the estimate \eqref{eq:Jaco-cota}, the boundedness in $\mathcal{L}^{p,\lambda}(-1,1)$ of $W_{1,n}$ and $W_{2,n}$ will follow from the inequalities
\[
\|(1-x^2)^{\pm 1/4}Hg(x))\|_{p,\lambda}\le C \|(1-x^2)^{\pm 1/4}g(x))\|_{p,\lambda}.
\]
Then, by using \eqref{eq:Hilbert}, it is enough that
\[
\frac{\lambda-1}{p}<\frac{-1}{4}<1+\frac{\lambda-1}{p}\qquad \text{ and }\qquad
\frac{\lambda-1}{p}<\frac{1}{4}<1+\frac{\lambda-1}{p},
\]
and this is implied by \eqref{eq:condi}.

To treat $W_{3,n}$ we start by using \eqref{eq:des-L1}, then
\begin{equation*}
\|W_{3,n}f\|_{p,\lambda}\le \|p_{n+1}^{(\alpha,\beta)}\|_{p,\lambda}\int_{-1}^{1}|p_{n+1}^{(\alpha,\beta)}(t)f(t)|\, dt\le
\|p_{n+1}^{(\alpha,\beta)}\|_{p,\lambda}\|p_{n+1}^{(\alpha,\beta)}\|_{q,\lambda}^{*}\|f\|_{p,\lambda}.
\end{equation*}
Now, from \eqref{eq:Jaco-Hilb}, we have $\|p_{n+1}^{(\alpha,\beta)}\|_{q,\lambda}^{*}\le C (\|h_{-,n+1}\|_{q,\lambda}^{*}
+\|h_{+,n+1}\|_{q,\lambda}^{*})$. So, to conclude the estimate it is enough to apply Lemma \ref{lem:norms-p} and Lemma \ref{lem:norms-q}. Indeed,
\begin{align*}
\|W_{3,n}f\|_{p,\lambda}&\le C
\|p_{n+1}^{(\alpha,\beta)}\|_{p,\lambda}(\|h_{-,n+1}\|_{q,\lambda}^{*}
+\|h_{+,n+1}\|_{q,\lambda}^{*})\|f\|_{p,\lambda}\le C \|f\|_{p,\lambda}.
\end{align*}

Let us show that conditions \eqref{eq:condi} are necessary for the uniform boundedness. We consider the operators $T_nf(x)=S_nf(x)-S_{n-1}f(x)$, then it is clear that
\[
T_nf(x)=a_n^{(\alpha,\beta)}(f)p_n^{(\alpha,\beta)}(x).
\]
By using the uniform boundedness for $S_n$, we have
\begin{equation}
\label{eq:T-op}
\|T_nf\|_{p,\lambda}=|a_n^{(\alpha,\beta)}(f)|\|p_n^{(\alpha,\beta)}\|_{p,\lambda}\le C \|f\|_{p,\lambda},
\end{equation}
with a constant $C$ independent of $n$ and $f$.

Taking $f=\sgn(p_n^{(\alpha,\beta)})$, \eqref{eq:T-op} becomes
\[
\|p_n^{(\alpha,\beta)}\|_{1,0}\|p_n^{(\alpha,\beta)}\|_{p,\lambda}\le C.
\]
So, by using \eqref{eq:norma-Lp}, the inequality $\|p_n^{(\alpha,\beta)}\|_{p,\lambda}\le C$ has to be verified and this fact implies, by \eqref{eq:norma-Morrey}, $p\le 4(1-\lambda)$.

Now, by considering $f=\sgn(p_n^{(\alpha,\beta)})|p_n^{(\alpha,\beta)}|^{q-1}$, with $p^{-1}+q^{-1}=1$, in \eqref{eq:T-op}, we have
\[
\|p_n^{(\alpha,\beta)}\|_{q,0}^{q}\|p_n^{(\alpha,\beta)}\|_{p,\lambda}\|p_n^{(\alpha,\beta)}\|_{q,\lambda}^{-q/p}\le C.
\]
Analyzing the different cases of the previous inequality for $p\le 4(1-\lambda)$ with the estimates in Lemma \ref{lem:norms-p}, we deduce the restriction $p\ge 4(1-\lambda)/3$.

To conclude the necessity of \eqref{eq:condi}, we have to check that for the cases $p=4(1-\lambda)/3$ and $p=4(1-\lambda)$ the inequality \eqref{eq:Sn} is not possible.

For $p=4(1-\lambda)$ the operators $W_{2,n}$ and $W_{3,n}$ are uniformly bounded. Let us see that $W_{1,n}$ is unbounded. We proceed by contradiction. If $W_{1,n}$ is bounded, we have the equivalent estimate
\[
\|p_{n+1}^{(\alpha,\beta)}Hg\|_{p,\lambda}\le C \|(1-(\cdot)^2)^{-1/2}(p_{n}^{(\alpha+1,\beta+1)})^{-1}g\|_{p,\lambda}.
\]
Now, from \eqref{eq:Hil-2} with $s=1$ and $f(t)=\chi_{(0,r)}(t)p_n^{(\alpha+1,\beta+1)}(t)(1-t^2)^{1/4}$, for  $r>0$, we have
\[
\int_{0}^{r} \frac{p_n^{(\alpha+1,\beta+1)}(t)(1-t^2)^{1/4}}{1-t}\, dt \|p_n^{(\alpha,\beta)}\chi_{(r,1)}\|_{p,\lambda}\le C \|(1-t^2)^{-1/4}\chi_{(0,r)}\|_{p,\lambda}.
\]
By using \eqref{eq:MNT}, we obtain the inequality
\[
\int_{0}^{r} \frac{dt}{1-t} \|(1-t)^{-1/4}\chi_{(r,1)}\|_{p,\lambda}\le C\|(1-t)^{-1/4}\chi_{(0,r)}\|_{p,\lambda},
\]
which, by \eqref{eq:norm-Bola}, is equivalent to $-\log(1-r)\le C$ and this is impossible.

In the case $p=4(1-\lambda)/3$,  $W_{1,n}$ and $W_{3,n}$ are bounded. Let us suppose that $W_{2,n}$ is also bounded. Thus, we have the equivalent inequality
\[
\|(1-(\cdot))^{1/2}p_{n+1}^{(\alpha+1,\beta+1)}Hg\|_{p,\lambda}\le \|(p_{n}^{(\alpha,\beta)})^{-1}g\|_{p,\lambda}.
\]
Then we can consider \eqref{eq:Hil-2} with $s=1$, $r=1/2$ and $f(t)=(p_n^{(\alpha,\beta)}(t))^4(1-t)$, to obtain
\begin{multline}
\label{eq:No-pbajo}
\int_{-1}^{1/2} (p_n^{(\alpha,\beta)}(t))^4\, dt \|(1-(\cdot)^2)^{1/2}p_n^{(\alpha+1,\beta+1)}\chi_{(1/2,1)}\|_{p,\lambda}\\\le C \|(1-t)(p_{n}^{(\alpha,\beta)})^{3}\chi_{(-1,1/2)}\|_{p,\lambda}.
\end{multline}
It is clear that
\[
\|(1-t)(p_{n}^{(\alpha,\beta)})^{3}\chi_{(-1,1/2)}\|_{p,\lambda}\le C \|(1+t)^{1/4}\chi_{(-1,1/2)}\|_{p,\lambda}\le C,
\]
where in the last step we have used \eqref{eq:norm-Bola}. Following the estimate in \cite{Leves} to evaluate the $L^p$-norm of Jacobi polynomials, we have 
\[
\int_{-1}^{1/2} (p_n^{(\alpha,\beta)}(t))^4\, dt\sim \log n.
\]
Finally, we observe that, for any $\varepsilon >0$ and $n$ big enough,
\begin{align*}
\|(1-(\cdot)^2)^{1/2}p_n^{(\alpha+1,\beta+1)}\chi_{(1/2,1)}\|_{p,\lambda}
&\ge  \liminf_{n\to}\|(1-(\cdot)^2)^{1/2}p_n^{(\alpha+1,\beta+1)}\chi_{(1/2,1)}\|_{p,\lambda}-\varepsilon\\
&\ge \|(1-(\cdot)^2)^{1/4}\chi_{(1/2,1)}\|_{p,\lambda}-\varepsilon\ge C,
\end{align*}
by using again \eqref{eq:norm-Bola}. In this way, \eqref{eq:No-pbajo} implies $\log n\le C$ and this is not possible.
\end{proof}

\section{Proof of Lemmas}
\label{sec:aux}

\begin{proof}[Proof of Lemma \ref{lem:norms-p}]
The case $\lambda=0$ is well known and it can be done, with the proper modifications, following the ideas in \cite[Proposition 1]{Leves}, so we omit it.
From \eqref{eq:Jaco-cota-h}, it will be enough to study the norms $\|h_{\pm,n}\|_{p,\lambda}$. We will focus on $h_{-,n}$, the other can be analyzed in the same way.

We have
\[
\|h_{-,n}\|_{p,\lambda}=\sup_{x\in (-1,1),r>0}\left(r^{-\lambda}n^{p/2-2}I\right)^{1/p},
\]
with $I=\int_{s}^{t}(z+1)^{-p/4}\, dz$, $s=(1-x-r)n^2$ and $t=(1-x+r)n^2$. Note that in our situation we can consider $r$ bounded by a positive value. We will analyze different cases.

When $r\le n^{-2}$, with the estimate $I\le C(t-s)$ we have 
\[
r^{-\lambda}n^{p/2-2}I\le C r^{1-\lambda}n^{p/2}\le n^{p/2-2(1-\lambda)}.
\]

Now, when $r>n^{-2}$, we consider the cases $s\le 1 < t$ and $1\le s <t$ (when $s<t\le 1$ it is verified that $r\le n^{-2}$ and this case has been already treated).

If $s\le 1 < t$ and $r>n^{-2}$, for $p\le 4(1-\lambda)$, we have $I\le C t^{1-p/4}$. So, using that in this case $1-x\le C r$, we obtain 
\[
r^{-\lambda}n^{p/2-2}I\le C r^{-\lambda}(1-x+r)^{1-p/4}\le C r^{1-\lambda-p/4}\le C.
\]
For $p>4(1-\lambda)$, it is clear that $I\le C t^{\lambda}\int_{s}^{t}(1+z)^{-p/4-\lambda}\, dz\le C t^{-\lambda}$ and 
\[
r^{-\lambda}n^{p/2-2}I\le C r^{-\lambda}(1-x+r)^{\lambda}n^{p/2-2(1-\lambda)}\le C n^{p/2-2(1-\lambda)}.
\]

Finally, for $1\le s <t$ and $r<n^{-2}$, when $p\le 4(1-\lambda)$ we can check easily that $I\le C(t^{1-p/4}-s^{1-p/4})\simeq t^{-p/4}(t-s)$. Then 
\[
r^{-\lambda}n^{p/2-2}I\le C r^{1-\lambda}(1-x+r)^{-p/4}\le Cr^{1-\lambda-p/4}\le C.
\] 
For $p> 4(1-\lambda)$, we use that $I\le C t^{\lambda}\int_{s}^{t} z^{-p/4-\lambda} \simeq t^{\lambda}(s^{1-p/4-\lambda}-t^{1-p/4-\lambda})\simeq t^{\lambda-1}s^{1-p/4-\lambda}(t-s)$. In this way
\[
r^{-\lambda}n^{p/2-2}I\le C (1-x-r)^{1-p/4-\lambda}\le C n^{p/2-2(1-\lambda)},
\]
and the proof of the upper bound is completed.

To obtain the lower estimate in the case $p\le 4(1-\lambda)$ we use that, for $\varepsilon>0$ and $n$ big enough, by \eqref{eq:MNT},
\[
\|p_n^{(\alpha,\beta)}\|_{p,\lambda}\ge C \|(1-t^2)^{-1/4}\|_{p,\lambda}-\varepsilon.
\]
Then, the required estimate follows by \eqref{eq:norm-Bola}. In the case $p>4(1-\lambda)$, we will use that $|P_n^{(\alpha,\beta)}(x)|\ge n^{\alpha}$, for $1-\frac{1}{n^2}<x<1$. This is a well known consequence of the Hilb type formula for the Jacobi polynomials. So,
\[
\|p_n^{(\alpha,\beta)}\|_{p,\lambda}\ge C \Big(n^{2\lambda+p(\alpha+1/2)}\int_{1-1/n^2}^1(1-x)^{p\alpha/2}\, dx\Big)^{1/p}\ge C n^{1/2-2(1-\lambda)/p},
\]
and the proof is finished.
\end{proof}

\begin{proof}[Proof of Lemma \ref{lem:norms-q}]
We analyze $h_{-,n}$, the other case is similar. It is clear that
\begin{align*}
\|h_{-,n}\|_{q,\lambda}^{*}&\le
\inf_{x\in (1-1/n^2,1)}\int_0^\infty r^{\lambda/p-1} \|\chi_{(B(x,r))^c}h_{-,n}\|_{L^q(-1,1)}\, dr\\&=
\inf_{x\in (1-1/n^2,1)}(J_1+J_2),
\end{align*}
where
\[
J_1=\int_0^{1-x} r^{\lambda/p-1} \|\chi_{(-1,x-r)\cup (x+r,1)}h_{-,n}\|_{L^q(-1,1)}\, dr
\]
and
\[
J_2=
\int_{1-x}^{1+x} r^{\lambda/p-1} \|\chi_{(-1,x-r)}h_{-,n}\|_{L^q(-1,1)}\, dr.
\]
It is easy to check that $\|h_{-,n}\|_{L^q(-1,1)}$ can be controlled by the right hand side in \eqref{eq:norma-Lp} and then
\[
J_1\le C \|h_{-,n}\|_{L^q(-1,1)}\int_{0}^{1/n^2}r^{\lambda/p-1}\, dr\le C n^{-2\lambda/p}\|h_{-,n}\|_{L^q(-1,1)}\le C,
\]
where in the last step we have used the condition $p\ge 4(1-\lambda)/3$.

For $J_2$, when $q<4$ we have $J_2\le C$ because
\[
J_2\le C \|h_{-,n}\|_{L^q(-1,1)}\int_{1-x}^{1+x}r^{\lambda/p-1}\, dr\le C.
\]
In the case $q>4$, we have
\[
\|\chi_{(-1,x-r)}h_{-,n}\|_{L^q(-1,1)}\le C((1+n^{-2})^{1/q-1/4}+(1-x+r+n^{-2})^{1/q-1/4})
\]
and
\begin{align*}
J_2&\le C \int_{1-x}^{1+x} r^{\lambda/p-1} ((1+n^{-2})^{1/q-1/4}+(1-x+r+n^{-2})^{1/q-1/4})\, dr\\
&\le C\left(1+\int_{1-x}^{1+x} r^{\lambda/p-1} (1-x+r+n^{-2})^{1/q-1/4})\, dr\right)\\
&\le C\left(1+\int_{1-x}^{1+x} r^{\lambda/p+1/q-5/4}\, dr\right)\\
&\le C(1+(1+x)^{\lambda/p+1/q-1/4}+(1-x)^{\lambda/p+1/q-1/4})\le C,
\end{align*}
where in the last step we have used the condition $p\ge 4(1-\lambda)/3$.
When $q=4$,
\[
J_2\le C \int_{1-x}^{1+x} r^{3\lambda/4-1} \log\left(\frac{2n^2+1}{1+n^2(1-x+r)}\right)\, dr.
\]
Then, after applying integration by parts, we deduce that
\begin{align*}
J_2&\le C\left(1+n^2(1-x)^{3\lambda/4}+\int_{1-x}^{1+x} \frac{n^{2}r^{3\lambda/4}}{1+n^2(1-x+r)}\, dr\right)
\\& \le C \left(1+n^2(1-x)^{3\lambda/4}+\int_{1-x}^{1+x} r^{3\lambda/4-1}\, dr\right)\\& \le C (1+n^2(1-x)^{3\lambda/4}+(1-x)^{3\lambda/4})
\end{align*}
and $\inf_{x\in (1-1/n^2,1)}(J_1+J_2)\le C$.
\end{proof}

\begin{proof}[Proof of Lemma \ref{lem:Hil-Mario}]
If $g(t)=\chi_{(-1,r)}(t)|f(t)|$, for a function $f$ such that $vf\in\mathcal{L}_{p,\lambda}(-1,1)$, for $r\le r \le s$, we have
\[
|Hg(x)|=\int_{-1}^r \frac{|f(t)|}{x-t}\, dt \ge \int_{-1}^r \frac{|f(t)|}{s-t}\, dt
\]
and then
\[
|Hg(x)|\ge \chi_{(s,r)}(x)\int_{-1}^r \frac{|f(t)|}{s-t}\, dt.
\]
In this way, the boundedness of the Hilbert transform implies \eqref{eq:Hil-2}.
\end{proof}


\end{document}